\newtheorem{theorem}{Theorem}[]
\newtheorem{corollary}{Corollary}
\newtheorem{lemma}{Lemma}
\newtheorem{claim}{Claim}
\newtheorem{problem}{Problem}
\renewcommand{\geq}{\geqslant}
\renewcommand{\leq}{\leqslant}
\newcommand{\tr}{\mathrm{tr}}
\newcommand{\R}{\mathbb{R}}
\newcommand{\eps}{\varepsilon}
\newcommand{\la}{\langle}
\newcommand{\ra}{\rangle}
\newcommand{\wv}{\widetilde{v}}
\newcommand{\wm}{\widetilde{m}}
\newcommand{\M}{\mathcal{M}}
\begin{document}

\title{Uniform tight frames as optimal signals}

\author{Gergely Ambrus, Bo Bai,  Jianfeng Hou}
\address{Gergely Ambrus, Alfr\'ed R\'enyi Institute of Mathematics, E\"{o}tv\"{o}s Lor\'{a}nd Research Network,
POB 127 H-1364 Budapest, Hungary.}
\email{ambrus@renyi.hu}
\thanks{Research of the first author was supported by NKFIH grants PD-125502 and KKP-133819.}

\address{Bo Bai, Theory Lab HK, Huawei Tech. Investment Co., Limited, Shatin, N. T., Hong Kong.}
\email{baibo8@huawei.com}

\address{Jianfeng Hou, Theory Lab HK, Huawei Tech. Investment Co., Limited, Shatin, N. T., Hong Kong; Center of Discrete Mathematics, Fuzhou University, Fujian, China, 350116.}
\email{hou.jianfeng@huawei.com}

\subjclass[2010]{42C40, 52C35}

\keywords{tight frames, energy minimization, frame potential, frame duality, signal processing}

\maketitle

\begin{abstract}
Non-orthogonal communication is a promising technique for future wireless networks (e.g., 6G and Wi-Fi 7). In the vector channel model, designing efficient non-orthogonal communication schemes amounts to the following extremum problem:
\[
\max \min_k \frac{|v_k|^2}{\sigma^2 + \sum_{l \neq k} \la v_k, v_l \ra^2}
\]
where the maximum is taken among vector systems $(v_k)_1^N \subset \R^d$ satisfying $c_1 \leq |v_k|^2 \leq c_2$ for every $k$, and the parameter $\sigma>0$ corresponds to the noise of the channel. We show that in the case $\sigma = 0$,  uniform tight frames are the only optimal configurations. We also give quantitative bounds on the optimal capacity of vector channels with relatively small noise.
\end{abstract}

\section{Introduction}\label{introd}

As essential elements in wireless communications, orthogonal frequency division multiplexing (OFDM) and multiple-input multiple-output (MIMO) have been widely deployed in cellular communications (e.g., 4G and 5G) and Wi-Fi networks. In an  OFDM system, the transmitter and receiver (for example, base station, access point, smart phones and other user devices) uses multiple orthogonal subcarriers to transmit information. In a typical MIMO system, on the other hand, the transmitter and receiver are usually equipped with multiple antennas to enhance information transmission efficiency and reliability \cite{C13, L18}.

To further improve transmission efficiency, non-orthogonal communication schemes attract much attention  both from academia and industry. In this work, we study the non-orthogonal communication problem in the following simple yet essential vector channel model \cite{G08}:
\begin{equation}\label{channel-model}
  y=\sum_{k=1}^Nv_k+w,
\end{equation}
where $v_k\in\R^d$ represents the vector sent by the $k$-th transmitter, and $y\in\R^d$ is the vector received at the receiver, with $w\in\mathbb{R}^d$ being the Gaussian noise vector with $w\sim N(0,\sigma^2)$. As we are approaching the massive machine type communications beyond 5G, it is highly required to support huge number of low rate users with limited channel dimensions $d$. Specifically, the aim is to find the proper communication scheme (i.e., construct $v_k$ with $k=1,\ldots,N$ for $N\geq d$) so that the optimal channel capacity (i.e., the maximum rate of reliable communication) may be achieved.

For the purpose of error correction, we may choose the vectors $v_i$ so that the distance of any two of them is as large as possible.  This is closely related to the {\em spherical coding} (or {\em packing}) problem, in which the goal is to find a set of $N$ points (codewords) on the unit sphere $S^{d-1}$ of $\R^d$ so that the minimum distance between the $N$ points is as large as possible~\cite{B04, C93}. The spherical code method has gained popularity in connection with the construction of spreading sequences for Code-Division Multiple-Access (CDMA) systems~\cite{V99}.

In the present article, we set off to maximize channel capacity. This is defined to be the theoretical smallest upper bound on the information rate of data that can be communicated at an arbitrarily low error rate using an average received signal power $S$ through an analog communication channel subject to additive white Gaussian noise (AWGN) power $\Phi$, where the unit is $\unit{bits/symbol}$. The classical Shannon-Hartley Theorem \cite{H28, S49} states that the channel capacity $C$ is given by
\[
C=\frac{1}{2}\log_2\left(1+\frac{S}{\Phi}\right).
\]

Returning to the vector channel model \eqref{channel-model}, if the $k$-th transmitter is assigned to a codeword $v_k \in \R^d$, then its received signal power $S$ is $|v_k|^2$, the squared Euclidean norm of $v_k$ (see \cite{G08}). The noise of $k$-th transmitter consists of two parts: one originates of its own channel noise $w\sim N(0,\sigma^2)$, while the other part is yielded by the interference with the other transmitters, which is expressed by the quantity $\sum_{l \neq k} \la v_k, v_l \ra^2$. Therefore, the power of noise may be expressed as $\Phi=\sigma^2+\sum_{l \neq k} \la v_k, v_l \ra^2$  (see \cite{K08,T99}), and the channel capacity from the $k$-th transmitter to the receiver can be formulated as
\begin{equation*}\label{chanel-capacity}
\log \left( 1 + \frac{|v_k|^2}{\sigma^2 + \sum_{l \neq k} \la v_k, v_l \ra^2} \right),
\end{equation*}
where $\frac{1}{2}$ is omitted in the equation (throughout the article we use the convention $1 / 0 = \infty$.) The aim is to find $v_1, \ldots, v_N \in \R^d$ such that the minimal channel capacity is as large as possible. Accordingly, we address the following question:

\begin{problem}\label{problem1}
Assume that $d \geq 2$, $N \geq d$, $0 < c_1 < c_2$ are positive bounds, and $\sigma \geq 0$ is a constant. Determine the quantity
\begin{equation}\label{maxmin}
\max_{\substack{{v_1, \ldots, v_N \in \R^d} \\ {c_1 \leq |v_i|^2 \leq c_2 \ \forall i}}} \min_{1 \leq k \leq N}  \log \left( 1 + \frac{|v_k|^2}{\sigma^2 + \sum_{l \neq k} \la v_k, v_l \ra^2} \right) \,.
\end{equation}
\end{problem}

In the present article, we solve Problem~\ref{problem1} in the special case $\sigma =0$, and obtain a stability version for small values of $\sigma>0$. The latter is essential for practical applications in signal processing.

We start by a trivial simplification.  Note that for any strictly monotone increasing function $f$, the maxima of
   \[
   \min_{1 \leq k \leq N} f \left( \frac{|v_k|^2}{\sigma^2 + \sum_{l \neq k} \la v_k, v_l \ra^2} \right)
   \]
and
\[
\min_{1 \leq k \leq N}  \frac{|v_k|^2}{\sigma^2 + \sum_{l \neq k} \la v_k, v_l \ra^2}
\]
are attained at the same vector configurations (subject to arbitrary boundary conditions). Since $ \log(1 + x)$ is strictly monotone increasing on $[0, \infty)$, we may consider the latter target function when seeking the solution of Problem~\ref{problem1}.

In order to formulate our results, we introduce a couple of notions. We are going to call a vector system $(v_i)_1^N \subset \R^d$  {\em uniform} if $|v_i| =c$ holds for every $i$ with some constant $c > 0$. Equivalently, $(v_i)_1^N \subset c S^{d-1}$, where $S^{d-1}$ denotes the unit sphere in $\R^d$. The vector system $v_1, \ldots, v_N$ is a {\em uniform tight frame} of norm $c$, if $|v_k| = c$ holds for every $k \in [N]$ (where $[N] = \{ 1, \ldots, N \}$), and
\begin{equation*}\label{vi_frame}
\sum_{i=1}^N v_k \otimes v_k = \frac{N c^2}{d}I_d.
\end{equation*}
Some basic properties of tight frames are collected in the subsequent section.

First, we study the $\sigma=0$ case, that is, when the channel is assumed to be noise-free. According to the above remarks, our task is to find the vector systems maximizing
\begin{equation}\label{Mdef}
M(v_1, \ldots, v_N) = \min_{1 \leq k \leq N}  \frac{|v_k|^2}{ \sum_{l \neq k} \la v_k, v_l \ra^2}
\end{equation}
subject to $c_1 \leq |v_i|^2 \leq c_2$ for every $1 \leq i \leq N$. We are going to call vector systems for which the maximum is attained to be {\em extremal}.

When $N \leq d$, \eqref{Mdef} is maximized when $(v_i)_1^N$ is an orthogonal system. In this case, the denominator is $0$ for every $k$, thus, $M(v_1, \ldots, v_N) = \infty$. Clearly, only orthogonal systems correspond to this value. Thus, from now on we may assume that the number of the vectors exceeds $d$, hence, $M(v_1, \ldots, v_N) < \infty$.

\begin{theorem}\label{thm1} Assume that $2 \leq d < N$, and $0 < c_1 < c_2$.
The vector system $v_1, \ldots, v_N \subset \R^d$ is a maximizer of $M(v_1, \ldots, v_N)$ defined in \eqref{Mdef} subject to the condition $c_1 \leq |v_i|^2 \leq c_2$ for every  $ i \in [N]$ if and only if $(v_i)_1^N$ is a uniform tight frame of norm $\sqrt{c_1}$.
\end{theorem}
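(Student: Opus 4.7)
The plan is to reduce to the uniform-norm case via a shrinking trick, handle the uniform case with frame potential duality, and then rule out non-uniform optima using the rigidity of the UNTF frame operator.

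\emph{Shrinking step.} Given $(v_i)$ with $c_1 \le |v_i|^2 \le c_2$, set $w_i := \sqrt{c_1/|v_i|^2}\, v_i$, so $|w_i|^2 = c_1$ for all $i$. Writing $c_l := \sqrt{c_1/|v_l|^2} \le 1$, the factor $c_k^2$ cancels between numerator and denominator, and one finds
\[
\frac{|w_k|^2}{\sum_{l \ne k} \la w_k, w_l \ra^2} = \frac{|v_k|^2}{\sum_{l \ne k} c_l^2 \la v_k, v_l \ra^2} \ge \frac{|v_k|^2}{\sum_{l \ne k} \la v_k, v_l \ra^2},
\]
so $M((w_i)) \ge M((v_i))$, with equality in the $k$-th ratio iff $\la v_k, v_l \ra = 0$ for every $l \ne k$ with $|v_l|^2 > c_1$.

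\emph{Uniform-norm bound.} For a uniform system $|w_i|^2 = c$, write $u_i := w_i/\sqrt c$; the $k$-th ratio equals $1/(c(T_k - 1))$, where $T_k := \sum_l \la u_k, u_l \ra^2$. By frame potential duality applied to $(u_i)$, $\sum_k T_k = \|\sum_i u_i \otimes u_i\|_{HS}^2 \ge N^2/d$, so $\max_k T_k \ge N/d$, with equality iff $(u_i)$ is a UNTF. Hence $M((w_i)) \le d/(c(N-d))$, attained precisely by scaled UNTFs of factor $\sqrt c$. Taking $c = c_1$ and chaining with the shrinking step yields $M((v_i)) \le M((w_i)) \le d/(c_1(N-d))$, with the scaled UNTF of factor $\sqrt{c_1}$ attaining the bound.

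\emph{Equality analysis.} Suppose $M((v_i)) = d/(c_1(N-d))$. Then $(w_i)$ must be a scaled UNTF of factor $\sqrt{c_1}$, so every ratio $r_k' := |w_k|^2/\sum_{l \ne k} \la w_k, w_l \ra^2$ equals $d/(c_1(N-d))$. Combined with $r_k \le r_k'$ from the shrinking step and $r_k \ge M = d/(c_1(N-d))$, this forces $r_k = r_k'$ for every $k$, so $\la v_k, v_l \ra = 0$ whenever $k \ne l$ and $|v_l|^2 > c_1$. The delicate part is to turn this orthogonality condition into a contradiction. If some $|v_{l_0}|^2 > c_1$, then $w_{l_0}$ (parallel to $v_{l_0}$) is orthogonal to every other $w_k$, so applying the UNTF frame operator $\sum_i w_i \otimes w_i = (Nc_1/d)\, I$ to $w_{l_0}$ yields $(Nc_1/d)\, w_{l_0} = |w_{l_0}|^2 w_{l_0} = c_1 w_{l_0}$, forcing $N = d$ and contradicting $N > d$. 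Hence every $|v_i|^2 = c_1$, and $(v_i) = (w_i)$ is the scaled UNTF; the converse follows directly from the uniform-norm bound.
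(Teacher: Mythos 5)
Your proof is correct, and its first two steps (shrink every vector to norm $\sqrt{c_1}$, then bound the uniform case via frame potential duality and Cauchy--Schwarz on the trace of the frame operator) coincide with the paper's argument; the genuine difference is in how the equality case is closed. The paper shrinks the long vectors one by one, uses maximality to argue that none of the individual ratios $m_k$ can change during the process, deduces that every vector of norm exceeding $\sqrt{c_1}$ must be orthogonal to all the others, and then decomposes the system into an orthogonal basis of an $r$-dimensional subspace plus a tight frame of its orthogonal complement, computing the value $(d-r)/(c_1(N-d))$ to force $r=0$. You instead observe directly that a vector orthogonal to all the others in the normalized system is an eigenvector of the frame operator with eigenvalue $c_1$, while that operator equals $(Nc_1/d)I_d$, which is impossible for $N>d$. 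Your finish is shorter and avoids the explicit decomposition; moreover, because you prove the upper bound $M \leq d/(c_1(N-d))$ for \emph{every} admissible configuration and only then analyse equality, you never need to presuppose that a maximizer exists, whereas the paper's treatment of the non-uniform case starts from an extremal system (existence being implicit via compactness). The only point worth spelling out in your equality analysis is that the denominators $\sum_{l\neq k}\la v_k,v_l\ra^2$ are positive (since $r_k'=d/(c_1(N-d))$ is finite), so the equality condition $\la v_k,v_l\ra=0$ for all $l\neq k$ with $|v_l|^2>c_1$ is legitimately extracted for each $k$; with that noted, the argument is complete.
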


By a simple calculation (see \eqref{extrvalue}) we obtain the optimal estimate for the capacity of a noise-free channel.

\begin{corollary}\label{cor1}
   The answer to Problem~\ref{problem1} when $\sigma = 0$  is
\[
\max_{\substack{{v_1, \ldots, v_N \in \R^d} \\ {c_1 \leq |v_i|^2 \leq c_2 \ \forall i}}} \min_{1 \leq k \leq N} N \log \left( 1 + \frac{|v_k|^2}{\sum_{l \neq k} \la v_k, v_l \ra^2} \right) = N  \log \left( 1+ \frac {d}{c_1 (N - d)}\right) \,.
\]
\end{corollary}

We note that the answer to Problem~\ref{problem1} clearly depends on the value of $\sigma$: not only the optimal capacity does so, but the structure of the extremal vector systems as well. To illustrate this, assume that $\sigma$ is very large compared to $c_2 N$. In this case, the dominant term of $\sigma^2 + \sum_{l \neq k} \la v_k, v_l \ra ^2$ is the first one. Therefore, the extremum of \eqref{maxmin} is attained when $ | v_i|^2 = c_2 $ for every $i$ -- that is, the vector norms are {\em maximal}, as opposed to the case $\sigma = 0$.

However, in practical applications, we may assume that the noise is relatively small. This is the situation that we are going to study.  First, we restrict the search to uniform vector systems.

\begin{theorem}\label{thm2}
Assume that $\sigma \leq c_1 \sqrt{(N-d)/d}$. Then there is a uniform tight frame of norm $\sqrt{c_1}$ which maximizes
\begin{equation}\label{maxmin_unif}
   \min_{1 \leq k \leq N}  \frac{|v_k|^2}{\sigma^2 + \sum_{l \neq k} \la v_k, v_l \ra^2}
\end{equation}
among uniform vector systems $(v_i)_1^N \subset \sqrt{c} S^{d-1}$ with  $c_1 \leq c \leq c_2$.
\end{theorem}

Calculating the corresponding channel capacity (see \eqref{cor2value}) yields:

\begin{corollary}\label{cor2}
Assuming that $\sigma \leq c_1 \sqrt{(N-d)/d}$ and that $(v_i)_1^N$ is a uniform vector system, 
\begin{equation}\label{maxminunifanswer}
  \begin{split}
  \max_{\substack{{v_1, \ldots, v_N \in \sqrt{c} S^{d-1}}\\{ c_1 \leq c \leq c_2}}}  \min_{1 \leq k \leq N} N &\log \left( 1 + \frac{|v_k|^2}{\sigma^2 + \sum_{l \neq k} \la v_k, v_l \ra^2}  \right)\\ &= N  \log  \left( 1 + \frac {c_1}{\sigma^2 + c_1 ^2 (N-d) / d} \right) \,.
\qedhere
\end{split}
\end{equation}
\end{corollary}

Next, we consider the general case. Although extremal vector systems are not necessarily uniform, we show that for small $\sigma$, there exists an extremal vector system containing relatively few vectors of non-minimal norm.

\begin{theorem}\label{thm3}
  Assuming that $\sigma<c_1 / \sqrt{d}$, there exists a vector system which is extremal with respect to Problem~\ref{problem1} containing at most
  \[
  d \frac {c_1^2 - \sigma^2}{c_1^2 - d \sigma^2 }
  \]
  vectors of norm strictly larger than $\sqrt{c_1}$.
\end{theorem}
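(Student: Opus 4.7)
The plan is to adapt the proof of Theorem~1 to the case $\sigma > 0$. Starting from any extremal configuration $(v_i)_1^N$, I would first argue that one may assume every \emph{big} vector---each $v_k$ with $|v_k|^2 > c_1$---satisfies $m_k = M$. The shrinking argument from Theorem~1 still works in modified form: scaling $v_k \mapsto \lambda v_k$ for $\lambda \in (0,1)$ weakly increases $m_l$ for every $l \neq k$ exactly as before, and although it now \emph{strictly} decreases $m_k$ (because of the additive $\sigma^2$ in the denominator), one can continuously shrink any non-minimal big $v_k$ until either $|v_k| = \sqrt{c_1}$ or $m_k = M$, preserving $M$ throughout.

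After this normalization, let $S = \{k : |v_k|^2 > c_1\}$, $r = |S|$, $T = \sum_{k \in S} |v_k|^2$, $Q = \sum_{k \in S} |v_k|^4$, and write $A_S = \sum_{k \in S} v_k \otimes v_k$, with $A$ the full frame operator. The identity $m_k = M$ for $k \in S$ rewrites as $\la v_k, A v_k \ra = |v_k|^4 + |v_k|^2/M - \sigma^2$; summing over $S$ yields
\[
\tr(A\, A_S) = Q + T/M - r\sigma^2.
\]
Since $A - A_S$ is positive semidefinite, $\tr(A\,A_S) \geq \tr A_S^2$, and frame potential duality applied to $A_S$ (Cauchy--Schwarz on its eigenvalues, using $\tr A_S = T$ and $\mathrm{rank}\, A_S \leq d$) gives $\tr A_S^2 \geq T^2/d$. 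Combining,
\[
T^2/d \leq Q + T/M - r\sigma^2.
\]

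Next I would invoke the lower bound $M \geq M_0 := c_1 d/(c_1^2(N-d) + d\sigma^2)$, which holds because the UNTF of scaling factor $\sqrt{c_1}$ is always a feasible candidate by Theorem~\ref{thm_uniform}. Substituting $T/M \leq T/M_0$ into the displayed inequality and using the Cauchy--Schwarz estimate $Q \geq T^2/r$ among the big-vector norms, together with $T > r c_1$, the plan is to reduce to a scalar estimate in $r$ that rearranges to the equivalent form $r(c_1^2 - d\sigma^2) \leq d(c_1^2 - \sigma^2)$.

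The main obstacle I anticipate is closing this final algebraic step cleanly. The displayed inequality degenerates in the limit where $|v_k|^2 \to c_1$ for every $k \in S$, in which regime no nontrivial bound on $r$ arises on its own. To extract the stated bound one has to exploit the strict inequality $|v_k|^2 > c_1$ together with both Cauchy--Schwarz estimates (on the eigenvalues of $A_S$ and on the norms $|v_k|^2$) simultaneously, carefully balancing the slack in each---this is where the specific constant $d(c_1^2 - \sigma^2)/(c_1^2 - d\sigma^2)$ should emerge.
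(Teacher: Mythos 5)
There is a genuine gap, and it is the one you yourself flag: the final algebraic step cannot be closed, and no amount of ``balancing the slack'' will fix it. Your inequality $T^2/d \leq Q + T/M - r\sigma^2$ becomes vacuous precisely in the relevant regime: putting $T = rc_1$, $Q = rc_1^2$ and $M = M_0 = c_1 d/(c_1^2(N-d)+d\sigma^2)$, the right-hand side equals $rc_1^2 N/d$ (the $\pm\sigma^2$ terms cancel), so the inequality reads $r \leq N$ and gives no information. The strictness $|v_k|^2 > c_1$ carries no quantitative content, since the big norms can be arbitrarily close to $c_1$, so it cannot be ``exploited'' to produce the constant $d(c_1^2-\sigma^2)/(c_1^2-d\sigma^2)$. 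There is also a structural mismatch: the theorem is an existence statement (\emph{some} extremal system has few big vectors), whereas your argument tries to bound the number of big vectors in an essentially arbitrary (normalized) extremal system, which is a stronger claim that your global trace inequality has no hope of delivering -- nothing in it distinguishes a configuration in which all $N$ vectors have norm $c_1 + \eps$.

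The missing idea is a variational selection plus a \emph{pairwise} estimate. The paper takes an extremal system minimizing $\sum_i |v_i|^2$ among all extremal systems, and perturbs it by simultaneously scaling the two big vectors $v_{N-1}, v_N$ with the largest $|\la v_{N-1}, v_N\ra|$ by a factor $\lambda = 1-\eps$. A derivative computation at $\lambda = 1$ shows the minimum of the $\mu_k$ does not decrease as long as $\sigma^2 < \la v_{N-1}, v_N\ra^2$, which would contradict the minimality of $\sum_i |v_i|^2$; hence every pair of big vectors satisfies $\la v_i, v_j\ra^2 < \sigma^2$. Feeding this near-orthogonality into the Welch bound (Lemma~\ref{lemma_welch}) applied only to the big vectors gives $c_1^2(M-d)/(d(M-1)) < \sigma^2$, which rearranges exactly to $M < d(c_1^2-\sigma^2)/(c_1^2 - d\sigma^2)$. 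In short: replace your single aggregated (summed) identity by (i) the choice of the extremal system with minimal total squared norm, (ii) a local scaling perturbation yielding a bound on the worst pairwise inner product among big vectors, and (iii) the Welch/Cauchy--Schwarz count of nearly orthogonal vectors of norm at least $\sqrt{c_1}$.
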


For channels with a larger amount of noise, we provide the following bound on the number vectors of non-minimal norm (note that this is indeed weaker for large values of $c_2/c_1$).

\begin{theorem}\label{thm4}
For all $\sigma \geq 0$, there exists a vector system which is extremal with respect to Problem~\ref{problem1} containing at most
\begin{equation}\label{thm4est}
  d \,\frac { (2 \sigma^2 + 2 c_2^2 - c_1^2)}{c_1^2 }
\end{equation}
vectors of norm strictly larger than $\sqrt{c_1}$.
\end{theorem}

We conclude the article by proving the following stability estimate for the channel capacity in the general case under the assumptions that the noise of the channel is not too large, and the number of vectors is sufficiently large.

\begin{theorem}\label{thm5} Assume that $\sigma<c_1 / \sqrt{2d }$ and that $N > 2 d c_2^2 / c_1^2$. Then
\begin{equation*}\label{eq_thm5}
  \begin{split}
  \max_{\substack{{v_1, \ldots, v_N \in \R^d} \\ {c_1 \leq |v_i|^2 \leq c_2 \ \forall i}}} \min_{1 \leq k \leq N}  \log \left( 1 + \frac{|v_k|^2}{\sigma^2 + \sum_{l \neq k} \la v_k, v_l \ra^2} \right) \\
  \leq \log \left( 1 +  \frac{d}{c_1(N -d) + \sigma^2 \cdot \frac{d}{c_1}   - \frac {d^2}{c_1 N}  (c_2^2 - c_1^2) \frac{c_1^2 - \sigma^2 } { c_1^2 - d \sigma^2 }} \right)\,.
\end{split}
\end{equation*}
\end{theorem}

This provides a quantitative estimate on the difference between \eqref{maxmin} and \eqref{maxminunifanswer}, showing that for practical applications, using a uniform tight frame of norm $\sqrt{c_1}$ as the set of possible codewords is a well-justified choice.

\section{Tight frames}

From the theoretical viewpoint, Problem~\ref{problem1} is closely related to the notion of {\em frames}, introduced originally by Duffin and Schaeffer \cite{DS52}. A vector system $(v_i)_1^N \subset \R^d$ is called a {\em frame} if there exist $0 < A \leq B < \infty $ such that
\[
A |w|^2 \leq \sum_{i = 1}^N \la w, v_i \ra^2 \leq B |w|^2
\]
holds for every vector $w \in \R^d$. If $A = B$ holds above, the vector system is a tight frame. Frame theory has become a well-studied topic in recent years, with plenty of real-world applications. Of the excessive literature on frame theory and its application in information theory, we only pick the volumes \cite{CK13} and \cite{O16}, in which the interested reader may find ample literature on the subject.

An alternative definition of tight frames involves the notion of the {\em tensor product} of the vectors $u,v \in \R^d$, which is the $\R^d \rightarrow \R^d$ linear map $u \otimes v$  satisfying
\[
(u \otimes v) z =  u \la z,v \ra
\]
for every $z \in \R^d$.
Given a vector system $(v_i)_1^N \subset \R^d$, we define its {\em frame operator \cite{BF03}} $A$ by
\begin{equation}\label{Def_frameoperator}
  A(v_1, \ldots, v_N) = \sum_{i=1}^N v_i \otimes v_i \,.
\end{equation}

A set of vectors $v_1, \ldots, v_N$ in $\R^d$ is called a {\em tight frame} if its frame operator is a constant multiple of the identity operator, that is,
\begin{equation}\label{fra}
\sum_{i=1}^N v_i \otimes v_i = \lambda \, I_d
\end{equation}
with a real constant $\lambda \in \R$. This is equivalent to requiring that
\[
\sum_{i=1}^N \la w, v_i \ra^2 = \lambda |w|^2
\]
holds for every vector $w \in \R^d$.

A uniform vector system $(v_i)_1^N \subset \R^d$ which satisfies \eqref{fra} is called a {\em uniform tight frame}. In the special case when the common norm is 1, we talk about a {\em unit norm tight frame} (UNTF). By comparing traces in \eqref{fra}, it immediately follows that in this latter case, $\lambda = N/d$. The complete characterization of unit norm tight frames was given by Benedetto and Fickus \cite{BF03} -- it also follows that UNTF's exist for every $N \geq d$ (see \cite{GVT98} as well, and \cite{I20} for the non-uniform case).

We associate to a vector system $(v_i)_1^N \subset \R^d$ its {\em frame potential } (or 2-frame potential \cite{EO12}) defined by
\begin{equation*}\label{framepot}
FP\left(v_1, \ldots, v_N\right) = \sum_{i,j = 1} ^N \la v_i, v_j \ra ^2.
\end{equation*}
The frame potential was introduced by Duffin and Schaeffer~\cite{DS52} (see \cite{BF03}, \cite{FJKO05} and \cite{CGGKO20} for further applications and generalizations).

Let $G(v_1, \ldots, v_N)$ denote the Gram matrix corresponding to the vector system $(v_i)_1^N$, that is, the $N \times N$ matrix $G$ satisfying
\begin{equation*}\label{gramdef}
  G(v_1, \ldots, v_N)_{ij} = \la v_i , v_j \ra \,.
\end{equation*}
If $L$ denotes the $N \times d$ matrix with rows $v_1^\top,\ldots, v_N^\top $, then
\begin{equation}\label{Gram2}
G(v_1, \ldots, v_N) = L L^\top,
\end{equation}
and on the other hand,
\begin{equation}\label{FrameOp2}
A(v_1, \ldots, v_N) = L^\top L.
\end{equation}

The frame potential of the vector system may be expressed as
\[
FP (v_1, \ldots, v_N ) =  \tr \, G^2 = \sum_{i,j = 1}^N G_{ij}^2 =  \| G \|_{HS}^2,
\]
the square of the Hilbert-Schmidt norm of $G$. Thus, using \eqref{Gram2}, \eqref{FrameOp2}, and the property that for arbitrary $N \times N$ matrices $R,S$, $\tr (R S^\top) = \tr (R^\top S)$,
\begin{equation}\label{FPduality}
FP\left((v_i)_1^N\right) = \| G \| _{HS}^2 =  \tr (L L^\top L L^\top) = \tr (L^\top L L^\top L) =  \| A \|_{HS} ^2.
\end{equation}
The above formula is called the {\em frame potential duality,} which lies at the core of the proof of the characterization of UNTF's \cite{BF03}.

\section{The noise-free case}

\begin{proof}[Proof of Theorem \ref{thm1}]
Let $(v_1, \ldots, v_N)$ be an extremal vector system, and introduce
\begin{equation}\label{mkdef}
  m_k = \frac{|v_k|^2}{ \sum_{l \neq k} \la v_k, v_l \ra^2}
\end{equation}
for every $k = 1, \ldots, N$.
Then, by \eqref{Mdef}, $M(v_1, \ldots, v_N) = \min_k m_k$, and since $(v_i)_1^N$ is extremal, $M(v_1, \ldots, v_N)$ is maximal among the suitable vector systems. Call a direction vector $u \in S^{d-1}$ {\em minimal}, if $u = v_k / |v_k|$ for some $k \in [N]$ with $m_k = M(v_1, \ldots, v_N)$. Denote by $\M(v_1, \ldots, v_N)$ the set of minimal directions corresponding to the vector system $(v_i)_1^N$.

We will show that extremal vector systems are uniform. To that end, assume $|v_i| > \sqrt{c_1}$ for some $i \in [N]$. We alter the vector system by defining
\[
\widetilde{v}_k = \begin{cases}
v_k, & \text{for }k \neq i\\
\frac{\sqrt{c_1}}{|v_k|} v_k, & \text{for }k = i
\end{cases}
\]
for every $k\in [N]$. Accordingly, introduce
\begin{equation}\label{mk2def}
  \widetilde{m}_k = \frac{|\widetilde{v}_k|^2}{ \sum_{l \neq k} \la \widetilde{v}_k, \widetilde{v}_l \ra^2}
\end{equation}
for every $k \in [N]$.
\begin{claim}\label{claim1}
  The vector system $(\widetilde{v}_1,  \ldots, \widetilde{v}_N)$ described above is also extremal. Moreover, $\M(\widetilde{v}_1,  \ldots, \widetilde{v}_N) \subseteq \M(v_1, \ldots, v_N)$ holds, with equality if and only if $v_i$ is orthogonal to every direction in $\M(v_1, \ldots, v_N)$ different from $v_i / |v_i|$.
\end{claim}

\begin{proof}
  Clearly, $m_i = \wm_i$. Taking any $k \in [N] \setminus \{i\}$, we have that $\la \widetilde{v}_k, \wv_i \ra^2  \leq \la v_k , v_i \ra^2$, where equality holds if and only if  $\la v_k , v_i \ra = 0$. Since all the terms $\la v_k, v_l \ra$ not involving $v_i$ remain unchanged, we see that $\wm_k \geq m_k$ for every $k \in [N]$. In particular, $\min_{k \in [N]}\wm_k \geq \min_{k \in [N]} m_k$, and since this latter is globally maximal, we derive that $(\wv_1, \ldots, \wv_N)$ must be extremal too.

  For the second statement, the inclusion is trivial by the above argument. Notice that $m_k = \wm_k$ holds if and only if $k = i$ or $\la v_k , v_i \ra = 0$. Thus, if  $\M(\widetilde{v}_1,  \ldots, \widetilde{v}_N) = \M(v_1, \ldots, v_N)$, then every minimal direction is either the direction of $v_i$, or orthogonal to it.
\end{proof}

Applying Claim~\ref{claim1} repeatedly to each vector of norm greater than $\sqrt{c_1}$ leads to a uniform vector system of norm $\sqrt{c_1}$ which is extremal. By scaling, we may assume that $c_1 = 1$. Next, we characterize uniform extremal vector systems using an argument along the lines of Theorem 6.2. in \cite{BF03}.

Clearly,
\[
 \min_{1 \leq k \leq N}  \frac{1}{ \sum_{l \neq k} \la v_k, v_l \ra^2}
\]
is maximized if and only if its reciprocal is minimized. Thus, we may study the extremum problem
\begin{equation*}\label{Mrec}
  \min_{v_1, \ldots, v_N \in S^{d-1}} \max_{1 \leq k \leq N} \sum_{l \neq k} \la v_k, v_l \ra^2.
\end{equation*}
Since $|v_k| = 1$ for every $k \in [N]$, this is attained at the same configurations as the minmax of
\[
E_k := \sum_{l=1}^N \la v_k, v_l \ra^2.
\]
By frame potential duality \eqref{FPduality},
\begin{align*}\label{}
  N \max_{1 \leq k \leq N} E_k &\geq {\sum_{k, l =1} ^N \la v_k, v_l \ra^2 }  \\
  &=  \| G (v_1, \ldots, v_N) \| _{HS}^2 \\
  &= \| A(v_1, \ldots, v_N)\|_{HS}^2 \,.
\end{align*}
Since \eqref{Def_frameoperator} shows that $\tr A = N$, the Cauchy-Schwarz inequality applied to the diagonal entries of $A$ implies that
\begin{equation}\label{AHS}
\| A(v_1, \ldots, v_N)\|_{HS}^2  \geq \frac{N^2}{d}\,,
\end{equation}
therefore,
\[
\min_{v_1, \ldots, v_N \in S^{d-1}} \max_k  \sum_{l=1}^N \la v_k, v_l \ra^2 \geq \frac{N}{d} \,.
\]
Note that by \eqref{Def_frameoperator}, diagonal entries of $A(v_1, \ldots, v_N)$ are non-negative. Thus,  equality may hold in \eqref{AHS} only if all diagonal entries of $A(v_1, \ldots, v_N)$ are equal, and all off-diagonal entries are 0. Therefore, $A = \frac N d I_d$, that is, the vectors $v_i$ form a UNTF.  In this case, the above bounds are indeed achieved.

This completes the characterization of uniform extremal systems: these are uniform tight frames of norm $\sqrt{c_1}$. Then,
\begin{equation}\label{extrvalue}
  \frac{|v_k|^2}{ \sum_{l \neq k} \la v_k, v_l \ra^2} = \frac {d}{c_1 (N-d)}
\end{equation}
holds for every $1 \leq k \leq N$. Thus, $v_k / |v_k|$ is a minimal direction for every $k \in [N]$.

Let us return to the general case. Let $(v_i)_1^N$ be an extremal vector system.
 Claim~\ref{claim1} implies that $\M(v_1, \ldots, v_N)$ contains the direction of every vector $v_k$,  which is only possible if each
 vector of norm exceeding $\sqrt{c_1}$ is orthogonal to all the other vectors. Thus, the system $(v_i)_1^N$ must be the union of an orthogonal base of an $r$-dimensional subspace $H$ consisting of vectors of norm  in $(\sqrt{c_1}, \sqrt{c_2}]$, and a $\sqrt{c_1}$-norm tight frame of $S^\perp$ consisting of $N-r$ vectors. However, in this case, the value of \eqref{Mdef} is
\[
M(v_1, \ldots, v_N) =  \frac{d-r}{c_1 (N - d)}
\]
by \eqref{extrvalue}. This shows that the vector system may only be extremal when $r=0$, that is, the vector system is a uniform tight frame.
\end{proof}

\section{Results for $\sigma^2 >0$}

\begin{proof}[Proof of Theorem~\ref{thm2}]
Let $|v_i|^2 =  c$ for every $i$ with $c \in [c_1, c_2]$. Clearly,  maximizing \eqref{maxmin_unif} on $\sqrt{c} S^{d-1} $ is equivalent to solving
\begin{equation}\label{mmm}
\min_{v_1, \ldots, v_N \in \sqrt{c}\, S^{d-1}}  \max_{1 \leq k \leq N} \frac{\sigma^2 + \sum_{l \neq k} \la v_k, v_l \ra^2}{c} \,.
\end{equation}
For a fixed value of $c$, the contribution of the term $\sigma^2 /c$ is constant, therefore it may be omitted from the target function, and the results of the previous section apply. Therefore, the extremum value is attained when the vector system is a uniform tight frame of norm $\sqrt{c}$, and the extremal value of \eqref{mmm} is
\begin{equation}\label{cfunction}
\frac{\sigma^2}{c} + c \,\frac {N -d}{d}\,.
\end{equation}
Thus, we need to minimize the above quantity as a function of $c$ over the interval $[c_1, c_2]$. Since $N>d$,  \eqref{cfunction} is decreasing on the interval $[0, \sigma \sqrt{d / (N- d)}]$ and is increasing for $c >  \sigma \sqrt{d / (N- d)}$.
 Thus, when $c_1 \geq \sigma \sqrt{d / (N- d)}$, the minimum over the interval $[c_1, c_2]$ is attained at $c = c_1$.
\end{proof}

In the extremal case, by \eqref{mmm} and \eqref{cfunction},
\begin{equation}\label{cor2value}
  \frac{|v_k|^2}{\sigma^2 + \sum_{l \neq k} \la v_k, v_l \ra^2} = \frac{c_1}{\sigma^2 + c_1^2 (N - d) / d}  \, ,
\end{equation}
which proves Corollary~\ref{cor2}.

\begin{proof}[Proof of Theorem~\ref{thm3}]
Let $(v_i)_1^N$ be a vector system satisfying the boundary conditions $c_1 \leq |v_i|^2 \leq c_2$ for every $i$.

Let $I \subset [N]$ be a subset of indices with $|I| \geq 2$ so that $|v_i|^2 > c_1$ for every $i \in I$ (we will specify $I$ later). Introduce the {\em simultaneous scaling} corresponding to $I$ by a factor $\lambda <1 $ of  $(v_i)_1^N$ by setting
\[
\widetilde{v_i} = v_i
\]
for $i \not \in I$, and
\[
\widetilde{v_i} =  \lambda v_i
\]
for $i \in I$. If $\lambda<1$ is close enough to 1, all vectors of the simultaneously scaled configuration have norm between $\sqrt{c_1}$ and $\sqrt{c_2}$.

As in \eqref{mkdef} and \eqref{mk2def}, let
\[
\mu_k = \frac{|v_k|^2}{\sigma^2 + \sum_{l \neq k} \la v_k, v_l \ra^2}\
\]
and
\[
\widetilde{\mu}_k = \frac{|\widetilde{v}_k|^2}{\sigma^2 + \sum_{l \neq k} \la \widetilde{v}_k, \widetilde{v}_l \ra^2}\,.
\]
We study the effect of  simultaneous scaling on the values $\mu_k$:
\begin{claim}\label{claim2}
  Assume that for the index set $ I \subset [N]$ consisting of at least 2 indices, \begin{equation}\label{signeq2}
\sigma^2 < \sum_{l \in I\setminus \{k \}} \la v_k, v_l \ra^2
\end{equation}
holds for every $k \in I$. Then for sufficiently small values of $\eps>0$,  the simultaneous scaling corresponding to $I$ with factor $\lambda = 1- \eps$ does not decrease any of the terms $\mu_k$. That is, $\widetilde{\mu}_k \geq \mu_k$ holds for every $k \in [N]$. In particular, if $(v_i)_1^N$ is extremal, then $(\widetilde{v}_i)_1^N$ needs to be extremal as well.

\end{claim}

\begin{proof}
 If $k \not \in I$, then $|v_k|$ is unchanged, while the denominator does not increase (it decreases if and only if there is $i \in I$ with $\la v_i,v_k \ra \neq 0$).  Thus,
\[
\widetilde{\mu}_k \geq \mu_k
\]
for every $k \not \in I$.

Assume now that $k \in I$. Then,
\[
\widetilde{\mu}_k  = \frac { \lambda^2 |v_k|^2}{\sigma^2 + \lambda^2 \sum_{l \not \in I} \la v_k, v_l \ra^2 + \lambda^4 \sum_{l \in I \setminus \{ k\}} \la v_k, v_l \ra^2} \, .
\]
Calculating the derivative of $\widetilde{\mu}_k $ with respect to $\lambda$ at $\lambda = 1$, one obtains  that its sign agrees to that of
\begin{equation}\label{signeq}
  \sigma^2 - \sum_{l \in I \setminus \{ k\}} \la v_k, v_l \ra^2 \,.
\end{equation}
Therefore, \eqref{signeq2} implies that the derivative is strictly negative for every $k \in I$, which suffices for the proof.
\end{proof}

Let now $(v_i)_1^N$ be an extremal vector system which, among the extremal configurations, minimizes $\sum_{i=1}^N |v_i|^2$. Denote by $M$ the number of vectors of norm strictly larger than $\sqrt{c_1}$ -- we may and do assume that $M \geq d$ and  these vectors are $v_1, \ldots, v_M$. The following classical bound guarantees the existence of two of these vectors whose inner product is large in absolute value.

\begin{lemma}[Welch \cite{W74}]\label{lemma_welch}
Assume that $M$ vectors $w_1, \ldots, w_M \subset \R^d$ are given so that $|w_i|^2 \geq c_1$ for every $i$. Then
\begin{equation}\label{eq_welch}
  \max_{ i \neq j} \la w_i, w_j \ra ^2 \geq \frac{c_1^2 (M-d)}{d (M-1)} \,.
\end{equation}
\end{lemma}

We note that an alternative bound has recently been proven by Bukh and Cox \cite{BC19}, which is stronger for $M \approx d + \sqrt{d}$. Yet, for our needs, the above estimate is sufficient.

Let now $i,j \in [M]$ be the indices provided by Lemma~\ref{lemma_welch}, and set $I = \{i,j\}$. Perform the simultaneous scaling corresponding to the index set $I$ with some factor $\lambda <1$. Due to the minimality of $\sum_{i=1}^N |v_i|^2$, the scaled vector system may not be extremal. Therefore, the condition of Claim~\ref{claim2} must be violated:
\[
\sigma^2 \geq \la v_i, v_j \ra^2.
\]
Thus, by \eqref{eq_welch},
\[
\sigma^2 \geq \frac{c_1^2 (M-d)}{d (M-1)}\,.
\]
Rearranging for $M$ we derive
\begin{equation}\label{Mineq}
  M < d \frac {c_1^2 - \sigma^2}{c_1^2 - d \sigma^2 }
\end{equation}
provided that $c_1^2 - d \sigma^2 >0$ holds.
\end{proof}

\begin{proof}[Proof of Theorem~\ref{thm4}]
Instead of Lemma~\ref{lemma_welch}, we now apply

\begin{lemma}\label{lemma2}
  Assume that $Q$ is an $M \times M$ symmetric matrix with nonnegative entries. Then there exists an index set $J \subset [M]$ such that for every $k \in J$,
  \begin{equation}\label{eq_lem2}
    \sum_{l \in J} Q_{kl} \geq  \frac{\sum_{i,j=1}^M Q_{ij}}{2 M} + \frac{Q_{kk}}{2}.
  \end{equation}
  
\end{lemma}

\begin{proof}
  Suppose on the contrary that the above inequality is not true. Starting with $[M]$, remove the indices one-by-one, selecting in each step the index $k$ of a row with minimal sum of the principal minor corresponding to the current index set. Removing this index results in deleting the corresponding row and column from the minor. By the above assumption, the sum of the entries removed is strictly less than
  \[
  2 \left(\frac{\sum_{i,j=1}^M Q_{ij}}{2 M} + \frac{Q_{kk}}{2} \right)  - Q_{kk} = \frac{\sum_{i,j=1}^M Q_{ij}}{ M} \,.
  \]
  Since this holds for every step, the sum of all the entries removed during the $M$ steps of the process is strictly less than $\sum_{i,j=1}^M Q_{ij}$, which contradicts to the fact that we remove all entries of $Q$.
\end{proof}

As before, let $(v_i)_1^N$ be an extremal vector system with minimal $\sum_{i=1}^N |v_i|^2$, and assume that the vectors which have norm $>\sqrt{c_1}$ are exactly $v_1, \ldots, v_M$. Our goal is to show that \eqref{thm4est} holds. Assume on the contrary that
\begin{equation}\label{Mlow}
 M > d \,\frac { (2 \sigma^2 + 2 c_2^2 - c_1^2)}{c_1^2 }\,.
\end{equation}

Let $Q$ be the $ M \times M$ matrix defined by $Q_{i,j} = \la v_i, v_j \ra^2$. By \eqref{FPduality} and the Cauchy-Schwarz inequality,
\begin{equation}\label{qijnorm}
\begin{split}
\sum_{i,j,=1}^M Q_{i,j}&=\left\|\sum_{i=1}^M v_i \otimes v_i \right\|_{HS}^2 \\ &\geq d \left( \frac {\sum_{i=1}^M |v_i|^2}{d} \right)^2 >  d \left( \frac {M c_1}{d} \right)^2 = \frac{M^2 c_1^2}{d}\,.
\end{split}
\end{equation}
Thus, Lemma~\ref{lemma2} implies that we may select a  set of indices $J \subset [M]$ for which
\begin{equation}\label{qij}
\sum_{l \in J} Q_{kl} \geq \frac{\sum_{i,j=1}^M Q_{ij}}{2 M} + \frac{Q_{kk}}{2} > \frac{M c_1^2}{2 d} + \frac{c_1^2}{2}
\end{equation}
holds for every $k \in J$. 

Next, we show that $J$ may not be a singleton. Indeed, suppose that $J = \{k\}$. Then, by \eqref{eq_lem2} and \eqref{qijnorm},
\[
Q_{kk} \geq  \frac{\sum_{i,j=1}^M Q_{ij}}{ M} > \frac{M c_1^2}{d}\,.
\]
On the other hand, $Q_{kk} = |v_k|^4 \leq c_2^2$. This implies that $M < d c_2^2 / c_1^2$, which contradicts \eqref{Mlow}.

Thus, we may assume that $|J| \geq 2$. By \eqref{qij}, for all $k \in J$,
\begin{equation}\label{eq_j}
  \sum_{l \in J\setminus \{k \}} \la v_k, v_l \ra^2 > \frac{M c_1^2}{2 d} + \frac{c_1^2}{2} - c_2^2.
\end{equation}
Note that \eqref{Mlow} implies that 
\[
\sigma^2 < \frac{M c_1^2}{2 d} + \frac{c_1^2}{2} - c_2^2.
\]
Thus, by \eqref{eq_j}, the conditions of Claim~\ref{claim2} are satisfied. Hence, the simultaneous scaling corresponding the index set $J$ and factor $1 - \eps$ for sufficiently small $\eps$ yields another extremal vector system.  This contradicts to the minimality of $\sum_{i=1}^N |v_i|^2$ among extremal vector systems.
\end{proof}

Finally, we prove a stability version of the estimate for the channel capacity.

\begin{proof}[Proof of Theorem~\ref{thm5}]
  Assume that $(v_i)_1^N$ is an extremal vector system provided by Theorem~\ref{thm3}. Let $A = \sum_{i=1}^N v_i \otimes v_i$ be the associated frame operator. As before,
\begin{equation}\label{ahs}
  \| A \|_{HS}^2 \geq \frac{\left(\sum_{i=1}^N |v_i|^2\right)^2}{d} \,.
\end{equation}
Let
\[
\mu = \min_k  \frac{|v_k|^2}{\sigma^2 + \sum_{l \neq k} \la v_k, v_l \ra^2}
\]
be the quantity for which we have to provide an upper bound. Then
\[
|v_k|^2  \geq \mu \sigma^2 + \mu \sum_{l \neq k} \la v_k, v_l \ra^2 = \mu \sigma^2 + \mu \sum_{l=1}^N \la v_k, v_l \ra^2 - \mu |v_k|^4
\]
holds for every $k$. By summing over $k$,
\begin{equation}\label{eqN}
  \sum_{k=1}^N |v_k|^2 \geq N \mu  \sigma^2 + \mu \| A \|_{HS}^2 - \mu \sum_{k=1}^N |v_k|^4 .
\end{equation}
Introduce $R =  \sum_{k=1}^N |v_k|^2$. By Theorem~\ref{thm3},
\begin{equation}\label{Rint}
  N c_1 \leq R \leq N c_1 + d \frac {c_1^2 - \sigma^2}{c_1^2 - d \sigma^2 } (c_2 - c_1)
\end{equation}
and
\[
\sum_{k=1}^N |v_k|^4  \leq N c_1^2  + d (c_2^2 - c_1^2) \frac {c_1^2 - \sigma^2}{c_1^2 - d \sigma^2 } \, .
\]
Therefore, \eqref{ahs} and  \eqref{eqN} lead to
\[
R \geq \mu \left( N \sigma^2 + \frac {R^2}{d}   - N c_1^2  - d (c_2^2 - c_1^2) \frac {c_1^2 - \sigma^2}{c_1^2 - d \sigma^2 }  \right).
\]
Since $R \geq N c_1$, the conditions $\sigma<c_1 / \sqrt{2d }$ and  $N > 2 d c_2^2 / c_1^2$ ensure that  the second term of the right-hand side is strictly positive. Then
\begin{equation}\label{muineq}
\mu \leq \frac{R}{N \sigma^2 + \frac{R^2}{d}   - N c_1^2  - d (c_2^2 - c_1^2) \frac{c_1^2 - \sigma^2}{c_1^2 - d \sigma^2 } }\,.
\end{equation}
In order to obtain an upper bound for $\mu$, we  maximize this quantity as a function of $R$ over the interval given by \eqref{Rint}. By a simple calculation one obtains that the conditions on $\sigma$ and $N$ imply that
\[
N^2 c_1^2 > N d (c_1^2 - \sigma^2) +  d^2 (c_2^2 - c_1^2) \frac {c_1^2 - \sigma^2}{c_1^2 - d \sigma^2 }.
\]
Therefore, \eqref{muineq} is decreasing over the whole interval defined by \eqref{Rint}.   Thus, its maximum value is attained at $R = N c_1$, which by \eqref{muineq} leads to the bound
\[
\mu \leq \frac{c_1}{\sigma^2 + (\frac N d - 1)c_1^2  - \frac d N  (c_2^2 - c_1^2) \frac{c_1^2 - \sigma^2 } { c_1^2 - d \sigma^2 }}\,.
\qedhere
\]
\end{proof}

\section{Acknowledgement}
We are grateful to Prof. Ed Saff for his valuable advices and to the anonymous referees for several suggestions improving the presentation of the results, and for the simplification of the proof of Theorem~\ref{thm1}.
This work is supported by the  Technical Cooperation Project of HUAWEI.

\end{document}